\numberwithin{equation}{section}
\newtheorem{theorem}{Theorem}[section]
\newtheorem*{theorem*}{Theorem}
\newtheorem{lemma}[theorem]{Lemma}
\newcommand{\is}{\hspace{2pt}}
\newcommand{\dt}{\is dt}
\newcommand{\DD}{\mathbb{D}}
\newcommand{\Z}{\mathbb{Z}}
\newcommand{\C}{\mathbb{C}}
\newcommand{\N}{\mathbb{N}}
\newcommand{\RR}{\Rightarrow}
\newtheorem{proposition}[theorem]{Proposition}
\theoremstyle{definition}
\theoremstyle{remark}
\newtheorem{remark}{Remark}
\begin{document}

\title[\resizebox{2.9in}{!}{  \large A VON NEUMANN TYPE INEQUALITY FOR AN ANNULUS}]{\large A VON NEUMANN TYPE INEQUALITY FOR AN ANNULUS}

\author[\resizebox{1.15in}{!}{\small GEORGIOS TSIKALAS}]{\small GEORGIOS TSIKALAS}

\address{DEPARTMENT OF MATHEMATICS AND STATISTICS, WASHINGTON UNIVERSITY IN ST. LOUIS, ST. LOUIS, MO, 63136}
\email{gtsikalas@wustl.edu} 
\subjclass[2010]{Primary: 47A63;\text{ } Secondary: 47A60, 47A25 } 
\keywords{von Neumann inequality, complete Pick space, functional calculi, annulus. \\  \hspace*{0.34 cm} I thank Professor John E. M$^c$Carthy, my thesis advisor, for all the conversations about the topic and his helpful suggestions. I also thank the Onassis Foundation for providing financial support for my PhD studies.}
\small
\begin{abstract}
    \small
Let $A_r=\{r<|z|<1\}$ be an annulus. We consider the class of operators 
$$\mathcal{F}_r:=\{T\in\mathcal{B}(H): r^2T^{-1}(T^{-1})^*+TT^*\le r^2+1,\hspace{0.08 cm}\sigma(T)\subset A_r\}$$
and show that for every bounded holomorphic function $\phi$ on $A_r:$
 $$\sup_{T\in\mathcal{F}_r}||\phi(T)||\le\sqrt{2}||\phi||_{\infty},$$
 where the constant $\sqrt{2}$ is the best possible. We do this by characterizing the calcular norm induced on $H^{\infty}(A_r)$ by $\mathcal{F}_r$ as the multiplier norm of a suitable holomorphic function space on $A_r$.
\end{abstract}
\maketitle

 \section{INTRODUCTION}
 \large 
 Let $T$ be a contractive operator on a Hilbert space. A famous inequality due to von Neumann (\cite{vNeumann}) asserts that $$||p(T)||\le \sup_{\DD}|p|$$
 whenever $p$ is a polynomial in one variable. As an immediate consequence, we find that for every one-variable polynomial $p$:
 $$\sup_{||T||\le 1}||p(T)||=||p||_{\infty}.$$An extraordinary amount of work has been done by the operator theory community extending the inequality of von Neumann. We mention two well-known generalisations due to And\^{o} and Drury. And\^{o}'s inequality (\cite{Ando}) states that if $T=(T_1, T_2)$ is a contractive commuting pair of operators on a Hilbert space, then 
 $$||p(T)||\le \sup_{\DD^2}|p| $$
 whenever $p$ is a polynomial in two variables. Now, while the corresponding analogue of And\^{o}'s theorem and the von Neumann inequality fails for three or more contractions (see e.g. \cite{Varopoulos}), Drury's generalization (\cite{Drury}) does tell us that every row $d$-contraction $T$ on a Hilbert space (i.e. every $d$-tuple $T=(T_1,\dots,T_d)$ of commuting operators such that $T_1T_1^*+\dots+T_dT_d^*\le 1$) satisfies 
 $$||\phi(T)||\le ||\phi||_{\text{Mult}(H^2_d)}$$
 whenever $\phi$ is a polynomial in $d$ variables. Here, $||\phi||_{\text{Mult}(H^2_d)}$ denotes the norm of the multiplication operator $M_{\phi}f:=\phi f$ acting on the Drury-Arveson space $H^2_d$ over the (open) $d$-dimensional complex unit ball $\mathbb{B}_d$. Since the operators $M_{z_i}$ of multiplication by the independent variables form a row d-contraction $M_z:=(M_{z_1},\dots, M_{z_d})$ acting on $H^2_d$, Drury's inequality is actually equivalent to the statement 
 $$\sup_{T_1T_1^*+\dots+T_dT_d^*\le 1}||\phi(T_1,\dots,T_d)||=||\phi||_{\text{Mult}(H^2_d)},$$
 for every $d$-variable polynomial $\phi.$ \par
  \par In this note, we deduce a von Neumann type inequality for the class of operators $$\mathcal{F}_r:=\{T\in\mathcal{B}(H): r^2T^{-1}(T^{-1})^*+TT^*\le r^2+1,\hspace{0.08 cm}\sigma(T)\subset A_r\}$$
 associated with the annulus $A_r=\{r<|z|<1\}$ by applying standard positivity arguments to model formulas in an appropriate function space setting. This setting is introduced in Section \ref{spacesection}; it is the holomorphic function space $\mathscr{H}^2(A_r)$ induced on $A_r$ by the kernel 
 $$k_{A_r}(\lambda,\mu):=\frac{1-r^2}{(1-\lambda\bar{\mu})(1-r^2/\lambda\bar{\mu})},\hspace{0.3 cm} \forall \lambda,\mu\in A_r.$$
 Our main result is the following theorem, the proof of which is contained in Section \ref{proofsection}. 
  \begin{theorem}\label{1}
For every $\phi\in H^{\infty}(A_r)$, 
 $$\sup_{T\in\mathcal{F}_r}||\phi(T)||=||\phi||_{\textnormal{Mult}(\mathscr{H}^2(A_r))}\le\sqrt{2}||\phi||_{\infty},$$
 where the constant $\sqrt{2}$ is the best possible.
 \end{theorem}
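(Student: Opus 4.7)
The plan is to split the theorem into three essentially independent pieces: the operator--multiplier identity $\sup_{T\in\mathcal{F}_r}\|\phi(T)\|=\|\phi\|_{\textnormal{Mult}(\mathscr{H}^2(A_r))}$, the quantitative bound $\|\phi\|_{\textnormal{Mult}(\mathscr{H}^2(A_r))}\le\sqrt{2}\|\phi\|_\infty$, and the sharpness of the constant $\sqrt{2}$.

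For the identity, I would begin by observing that the defining inequality of $\mathcal{F}_r$ is really a row-contractivity condition in disguise: with $S_1:=T/\sqrt{1+r^2}$ and $S_2:=rT^{-1}/\sqrt{1+r^2}$, the pair $(S_1,S_2)$ is a commuting row $2$-contraction whose joint spectrum is compactly contained in $\mathbb{B}_2$. Dually, the map $\psi:A_r\to\mathbb{B}_2$, $\psi(\lambda):=(\lambda,r/\lambda)/\sqrt{1+r^2}$, pulls the Drury-Arveson kernel back to
$$\psi^*k_{H^2_2}(\lambda,\mu)=\tfrac{1+r^2}{1-r^2}\,k_{A_r}(\lambda,\mu),$$
exhibiting $\mathscr{H}^2(A_r)$ as (a renormalization of) the restriction of $H^2_2$ to the subvariety $\psi(A_r)=\{z_1z_2=r/(1+r^2)\}\cap\mathbb{B}_2$. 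The upper bound $\sup\|\phi(T)\|\le\|\phi\|_{\textnormal{Mult}(\mathscr{H}^2(A_r))}$ then follows by combining the complete Pick extension theorem (which lifts $\phi$ to a $\Phi\in\textnormal{Mult}(H^2_2)$ with the same norm) with Drury's inequality applied at the multiplier level to $(S_1,S_2)$, yielding $\|\phi(T)\|=\|\Phi(S_1,S_2)\|\le\|\Phi\|_{\textnormal{Mult}(H^2_2)}$. For the reverse inequality, I would compress $M_z$ on $\mathscr{H}^2(A_r)$ to a finite-dimensional $M_z^*$-invariant subspace $V=\textnormal{span}\{k_{A_r}(\cdot,\mu_i)\}$ with $\mu_i\in A_r$: the compression $T_V:=P_VM_z|_V$ has spectrum $\{\mu_i\}\subset A_r$, and the scalar identity
$$\bigl[(1+r^2)-\lambda\bar{\mu}-r^2/(\lambda\bar{\mu})\bigr]\,k_{A_r}(\lambda,\mu)=1-r^2$$
translates into the operator inequality $M_zM_z^*+r^2M_z^{-1}(M_z^{-1})^*\le(1+r^2)I$ on $\mathscr{H}^2(A_r)$, which descends to $T_V$ once $T_V^{-1}$ is identified with $P_VM_{1/z}|_V$ via a Sarason-type commutant-lifting calculation. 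Thus $T_V\in\mathcal{F}_r$, and $\|\phi(T_V)\|=\|P_VM_\phi|_V\|$ exhausts $\|M_\phi\|$ as $V$ grows.

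For the bound $\|\phi\|_{\textnormal{Mult}(\mathscr{H}^2(A_r))}\le\sqrt{2}\|\phi\|_\infty$, I would argue by direct Laurent-coefficient comparison with the measure $d\sigma$ on $\partial A_r$ given by the sum of normalized arc-length on the two circles $\{|z|=1\}$ and $\{|z|=r\}$. The monomials $\{z^n\}_{n\in\mathbb{Z}}$ are orthogonal in $\mathscr{H}^2(A_r)$ with $\|z^n\|^2=1$ for $n\ge 0$ and $\|z^n\|^2=r^{-2|n|}$ for $n<0$, while Parseval gives $\|z^n\|^2_{L^2(d\sigma)}=1+r^{2n}$; a term-by-term check then produces the two-sided comparison $\|f\|^2_{\mathscr{H}^2(A_r)}\le\|f\|^2_{L^2(d\sigma)}\le 2\|f\|^2_{\mathscr{H}^2(A_r)}$. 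Combined with the trivial estimate $\|\phi f\|_{L^2(d\sigma)}\le\|\phi\|_\infty\|f\|_{L^2(d\sigma)}$, this yields $\|M_\phi f\|^2_{\mathscr{H}^2(A_r)}\le 2\|\phi\|_\infty^2\|f\|^2_{\mathscr{H}^2(A_r)}$. Sharpness is witnessed by $\phi_n(z):=z^n+r^nz^{-n}$: a direct computation gives $\|\phi_n\|_\infty=1+r^n$ and $\|M_{\phi_n}\cdot 1\|^2=\|\phi_n\|^2_{\mathscr{H}^2(A_r)}=1+r^{2n}\cdot r^{-2n}=2$, so $\|M_{\phi_n}\|/\|\phi_n\|_\infty\to\sqrt{2}$ as $n\to\infty$.

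The main obstacle I anticipate lies in the first part, specifically the $\textnormal{Mult}(H^2_2)$-level functional calculus needed to extend Drury's inequality from polynomials to the multiplier extension $\Phi$ produced by complete Pick theory. The strict containment $\sigma(T)\subset A_r$ in the definition of $\mathcal{F}_r$ is exactly what ensures the joint spectrum of $(S_1,S_2)$ is compact in $\mathbb{B}_2$, keeping this functional calculus well-defined; without that containment the argument would require a separate approximation scheme. The remaining steps are essentially bookkeeping (the compression argument) or direct coefficient computations.
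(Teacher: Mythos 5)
Your proposal is correct in outline, but it reaches the central identity $\sup_{T\in\mathcal{F}_r}\|\phi(T)\|=\|\phi\|_{\textnormal{Mult}(\mathscr{H}^2(A_r))}$ by a genuinely different route in both directions, while your $\sqrt{2}$-bound and sharpness example coincide with the paper's (the paper compares $\|\cdot\|_{\mathscr{H}^2(A_r)}$ with the classical $H^2(A_r)$ norm $\sum(r^{2n}+1)|a_n|^2$ -- your $L^2(d\sigma)$ -- and uses the same extremals $z^n+r^nz^{-n}$). For the inequality $\sup_T\|\phi(T)\|\le\|M_\phi\|$, the paper works directly on the annulus: it factors the positive hereditary function $(1-\phi(\lambda)\overline{\phi(\mu)})k_{A_r}(\lambda,\mu)$ as a sum of dyads and substitutes $T$ via the hereditary functional calculus, so that positivity of $1+r^2-r^2T^{-1}(T^{-1})^*-TT^*$ does the work; you instead push everything into $\mathbb{B}_2$ via $b_r$ and invoke the multiplier-level von Neumann inequality for the commuting row contraction $(T,rT^{-1})/\sqrt{1+r^2}$. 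That step, which you correctly flag as the crux, is a known result but is not free -- its standard proof is exactly the same Moore factorization plus hereditary substitution, now performed on the ball -- so your route is essentially the paper's argument conjugated by the embedding, at the cost of importing a theorem the paper proves from scratch in its special case. Where you genuinely diverge, and in my view improve on the paper, is the reverse inequality: the paper cannot use the bilateral shift $S$ on $\mathscr{H}^2(A_r)$ directly because $\sigma(S)=\overline{A_r}\not\subset A_r$, so it builds an elaborate approximation scheme (its Lemmas 4.2--4.4: shrunk annuli $A_{r,n}$, rescaled operator classes, uniform bounds on the hereditary factors, and Montel's theorem) to pass to the open annulus. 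Your compressions $T_V=P_VM_z|_V$ to finite spans of kernel functions have spectrum equal to a finite subset of $A_r$, inherit the inequality $T_VT_V^*+r^2T_V^{-1}(T_V^{-1})^*\le(1+r^2)I_V$ because $V$ is co-invariant for both $M_z$ and $M_{1/z}$ (no commutant lifting is actually needed: $(T_V^{-1})^*=(M_z^*|_V)^{-1}=M_{1/z}^*|_V$ is immediate), and satisfy $\phi(T_V)^*=M_\phi^*|_V$ since $T_V^*$ is diagonalized by the kernel functions; letting $V$ exhaust the dense span of kernel functions recovers $\|M_\phi\|$. If written out, this replaces the bulk of the paper's Section 4 with a two-line verification, and I would encourage you to keep it.
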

\noindent The fact that $k_{A_r}$ is a complete Pick kernel (to be defined below) also allows us to show the following extension result. 
\begin{theorem} \label{9}
 Let $0<r<1.$ For every $\phi\in H^{\infty}(A_r)$, the quantity 
 $$ \resizebox{0.995\hsize}{!}{$\min\{
||\psi||_{\text{Mult}(H^2_2)}: \psi\in\textnormal{Mult}(H_2^2) \text{ and } \psi\Big(\frac{z}{\sqrt{r^2+1}},\frac{r}{\sqrt{r^2+1}}\frac{1}{z}\Big) =\phi(z)\text{,  } \forall z\in A_r\}$}$$
lies in the interval $[||\phi||_{\infty},\sqrt{2}||\phi||_{\infty}].$ Moreover, the constant $\sqrt{2}$ is the best possible.
\end{theorem}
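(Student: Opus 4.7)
The plan is to reduce Theorem~\ref{9} to Theorem~\ref{1} via the Agler--McCarthy realisation theorem for irreducible complete Pick kernels. The key observation is that the map
$$\iota: A_r \to \mathbb{B}_2, \qquad \iota(z) := \Big(\frac{z}{\sqrt{r^2+1}},\ \frac{r}{\sqrt{r^2+1}\cdot z}\Big),$$
embeds the annulus into the ball in such a way that $\iota$ pulls the Drury--Arveson kernel back to a positive scalar multiple of $k_{A_r}$.

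First, I would verify this realisation by a short algebraic calculation. A direct expansion gives
$$1 - \langle \iota(\lambda), \iota(\mu)\rangle_{\mathbb{C}^2} = \frac{(1-\lambda\bar{\mu})(1-r^2/(\lambda\bar{\mu}))}{r^2+1},$$
which simultaneously confirms that $\iota$ maps into $\mathbb{B}_2$ and yields
$$\frac{1}{1-\langle \iota(\lambda),\iota(\mu)\rangle} = \frac{r^2+1}{1-r^2}\, k_{A_r}(\lambda,\mu).$$
Since multiplying a kernel by a positive constant leaves the multiplier algebra and its norm unchanged, $k_{A_r}$ is realised as an irreducible complete Pick kernel sitting inside $\mathbb{B}_2$ via the embedding $\iota$.

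Next, I would invoke the Agler--McCarthy realisation theorem (in the form later sharpened by Ball--Trent--Vinnikov): for any function $\phi$ on $A_r$,
$$||\phi||_{\textnormal{Mult}(\mathscr{H}^2(A_r))} = \min\{||\psi||_{\textnormal{Mult}(H_2^2)}: \psi\in\textnormal{Mult}(H_2^2),\ \psi\circ\iota = \phi\},$$
with the minimum attained. Combined with Theorem~\ref{1}, which gives $||\phi||_{\textnormal{Mult}(\mathscr{H}^2(A_r))}\le \sqrt{2}\,||\phi||_{\infty}$ with $\sqrt{2}$ sharp, this immediately produces the upper bound and the sharpness assertion of Theorem~\ref{9}. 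The lower bound is straightforward: for any admissible $\psi$ we have $||\psi||_{\textnormal{Mult}(H_2^2)}\ge ||\psi||_{H^{\infty}(\mathbb{B}_2)}\ge \sup_{z\in A_r}|\psi(\iota(z))| = ||\phi||_{\infty}$.

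The main obstacle, such as it is, will be to cite the realisation theorem in the precise form required here (irreducible but non-normalised kernel, finite ambient dimension $d=2$); once that is in place, the entire argument collapses to the kernel identity above and a direct quotation of Theorem~\ref{1}.
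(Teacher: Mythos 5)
Your proposal is correct and follows essentially the same route as the paper: verify the kernel identity realising $k_{A_r}$ (up to the positive constant $\tfrac{1-r^2}{1+r^2}$, which does not affect multiplier norms) inside $\mathbb{B}_2$ via the embedding $b_r=\iota$, apply the complete Pick extension formula to identify the minimum with $||\phi||_{\textnormal{Mult}(\mathscr{H}^2(A_r))}$, and conclude from the two-sided comparison with $||\phi||_{\infty}$ and its sharpness (Proposition \ref{8}, which is the content of Theorem \ref{1} you invoke). The technical point you flag about the form of the realisation theorem is resolved exactly as the paper does it, by noting that $\tfrac{1+r^2}{1-r^2}k_{A_r}$ is a normalized complete Pick kernel.
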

 \noindent Here, $H^2_2$ denotes the $2$-dimensional Drury-Arveson space on the open unit ball $\mathbb{B}_2\subseteq\C^2$.

 \small
 \section{PRELIMINARIES}
 \large 
\par Let $X$ be a nonempty set. A function $k:X\times X\to \C$ is called positive semi-definite, if whenever $n\in \N$ and $x_1, \dots,x_n\in X$ and $w_1,\dots w_n\in \C,$ then $\sum_{i,j=1}^{n}k(x_j,x_i)w_i\overline{w_j}\ge 0.$ We also say that $k$ is a \textit{kernel}. For each $x\in X,$ define a function $k(\cdot,x)$ on $X$ by $k(\cdot,x)(y)=k(y,x).$ Define also an inner product on the linear span of these functions by 
 $$\big\langle\sum_i a_ik(\cdot,x_i),\sum_jb_jk(\cdot,x_j)\big\rangle=\sum_{i,j}a_i\overline{b_j}k(x_j,x_i).$$
 Let $\mathcal{H}_k$ denote the Hilbert space obtained by completing the linear span of the functions $k(\cdot,x)$ with respect to the previous inner product. We may regard vectors $f$ in $\mathcal{H}_k$ as functions on $X,$ with $f(x)=\langle f,k(\cdot,x)\rangle.$ \\
The \textit{multiplier algebra} $\text{Mult}(\mathcal{H}_k)$ is defined as the collection of functions $\phi:X\to\C$ such that $(M_{\phi}f)(x)=\phi(x)f(x)$ defines a bounded operator $M_{\phi}:\mathcal{H}_k\to\mathcal{H}_k.$ The multipliers $\phi$ with $||M_{\phi}||\le C$ are characterized by 
 \begin{equation} (C^2-\phi(y)\overline{\phi(x)})k(y, x)\ge 0, \label{2} \end{equation}
 since it is equivalent to $||M^*_{\phi}f||_{\mathcal{H}_k}\le C||f||_{\mathcal{H}_k},$ for a dense subset of $\mathcal{H}_k$. 
\\ 
Now, suppose $k$ is a kernel on $X.$ We say that $k$ is a (normalized) \textit{complete Pick kernel} (and the space $\mathcal{H}_k$ it induces we call a \textit{complete Pick space}) if there exists some (possibly infinite) cardinal $d$ and an injection $b:X\to\mathbb{B}_d$ ($\mathbb{B}_d$ being the (open) unit ball in some $d$- dimensional Hilbert space $K$) such that 
$$1-\frac{1}{k(y,x)}=\langle b(y),b(x)\rangle_K$$
for all $x$ and $y$ in $X.$ The standard definition is actually based on finite interpolation problems (its equivalence to the one we gave is a theorem of Agler and M$^c$Carthy \cite{CompleteNP}), however we will not be needing it for the purposes of this note. As a consequence of our definition, $\mathcal{H}_k$ can be identified with the $*$-invariant subspace 
$$\mathcal{H}_k=\text{ closed linear span of } \{k_x:x\in \text{ran }  b\}\subset H^2_d.$$
Here, $H^2_d$ denotes the $d$-dimensional Drury-Arveson  space, which is the reproducing kernel Hilbert space induced by the kernel 
$$a_d(\lambda,\mu)=\frac{1}{1-\langle\lambda,\mu\rangle_K}$$
and defined on $\mathbb{B}_d$. \\
The class of complete Pick kernels is well-known and extensively studied. A comprehensive treatment can be found in \cite{Pick}. In the sequel, we will only have occasion to use the following basic result. Suppose 
$$k(y,x)=\frac{1}{1-\langle b(y),b(x)\rangle}$$
is a complete Pick kernel on a set $X$ embedding into the Drury-Arveson space $H^2_d.$ Then, for every $\phi\in\text{Mult}(\mathcal{H}_k)$ we obtain that $||\phi||_{\text{Mult}(\mathcal{H}_k)}$ is equal to
\begin{equation}min\{||\psi||_{\textnormal{Mult}(H^2_d)}:\psi\in\text{Mult}(H^2_d) \text{ and } \psi(b(x))=\phi(x),\text{ }\forall x\in X\}.\label{3} \end{equation}
\par
For the proof of Theorem \ref{1}, we will be making use of the \textit{Riesz-Dunford functional calculus} in the setting of the annulus $A_r.$ Instead of employing the standard Cauchy integral formula, we will adopt the equivalent Laurent series definition. Let $T\in\mathcal{B}(H)$ and suppose that the spectrum $\sigma(T)$ of $T$ is contained in $A_r.$ If $f=\sum_{n\in\Z}a_n z^n$ is any function holomorphic on $A_r,$ then $f(T)\in\mathcal{B}(H)$ is defined as
$$f(T)=\sum_{n\in\Z}a_n T^n.$$
Observe that since $\sigma(T)\subseteq A_r,$ the convergence of the above Laurent series is guaranteed. See e.g. \cite{Conway} for the basic properties of the Riesz-Dunford functional calculus. \\
We now set up the \textit{hereditary functional calculus} on $A_r.$ We say that $h$ is a hereditary function on $A_r$ if $h$ is a mapping from $A_r\times A_r$ to $\C$ and has the property that $$(\lambda,\mu)\mapsto h(\lambda,\bar{\mu})\in\C$$
is a holomorphic function on $A_r\times A_r.$ The set $\text{Her}(A_r)$ of hereditary functions on $A_r$ forms a complete metrizable locally convex topological vector space when equipped with the topology of uniform convergence on compact subsets of $A_r\times A_r.$ \\
If $T\in\mathcal{B}(H)$ with $\sigma(T)\subseteq A_r$ and $h$ is a hereditary function on $A_r,$ then we may define $h(T)\in\mathcal{B}(H)$ by the following procedure. Expand $h$ into a double Laurent series 
$$h(\lambda,\mu)=\sum_{m,n\in\Z}c_{mn}\lambda^m\bar{\mu}^n \hspace{0.3 cm}\text{ for all }\lambda,\mu\in A_r,$$
and then define $h(T)$ by substituting $T$ for $\lambda$ and $T^{*}$ for $\bar{\mu}$: 
$$h(T)=\sum_{m,n\in\Z}c_{mn}T^m(T^*)^n.$$
There is a natural involution $h\mapsto h^*$ on $\text{Her}(A_r),$ defined by
$$h^*(\lambda,\mu)=\overline{h(\mu,\lambda)},\hspace{0.3 cm} \text{ for all      }\lambda,\mu\in A_r.$$
It is easy to see that $h^*(T)=h(T^*).$\\
Finally, we record the following fundamental lemma (the counterpart of Theorem 2.88 in \cite{NewPick} for the annulus) which is essentially the holomorphic version of Moore's theorem on the factorisation of positive semi-definite kernels. It will allow us to decompose positive semi-definite hereditary functions as sums of dyads.  
\begin{lemma} \label{4}
Suppose $U$ is a positive semi-definite hereditary function on $A_r$, then there exists a sequence $\{f_n\}_{n\in\mathbb{N}}$ of functions holomorphic on $A_r$ such that
$$U(\lambda,\mu)=\sum_{n=1}^{\infty}f_n(\lambda)\overline{f_n(\mu)},\hspace{0.2 cm} \forall \lambda,\mu \in A_r,             $$
the series converging uniformly on compact subsets of $A_r\times A_r.$

\end{lemma}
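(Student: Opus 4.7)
The plan is to realise $U$ as the reproducing kernel of its canonical Hilbert space $\mathcal{H}_U$ (constructed via the procedure recalled in the Preliminaries) and then expand $U$ against an orthonormal basis of $\mathcal{H}_U$ whose elements turn out to be holomorphic on $A_r$. First, because $U$ is hereditary, the section $\lambda\mapsto U(\lambda,\mu)$ is holomorphic on $A_r$ for each fixed $\mu\in A_r$. Combining the reproducing property with Cauchy--Schwarz gives the pointwise bound $|f(\lambda)|\le\|f\|_{\mathcal{H}_U}\sqrt{U(\lambda,\lambda)}$ for every $f\in\mathcal{H}_U$; since $U(\lambda,\lambda)$ is continuous on $A_r$, Cauchy sequences in $\mathcal{H}_U$ are uniformly Cauchy on compact subsets of $A_r$, and hence every vector of $\mathcal{H}_U$ is a holomorphic function.

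Next, I would establish separability of $\mathcal{H}_U$ in order to obtain a countable orthonormal basis. Pick a countable dense set $\{\mu_j\}\subset A_r$; if $f\in\mathcal{H}_U$ is orthogonal to every $U(\cdot,\mu_j)$, then the reproducing property yields $f(\mu_j)=0$, and the identity principle (applied to the holomorphic function $f$) forces $f\equiv 0$. Thus $\{U(\cdot,\mu_j)\}$ has dense span. Fix an orthonormal basis $\{f_n\}$ of $\mathcal{H}_U$; each $f_n$ is holomorphic on $A_r$. Expanding $U(\cdot,\mu)\in\mathcal{H}_U$ yields
$$U(\cdot,\mu)=\sum_n\langle U(\cdot,\mu),f_n\rangle f_n=\sum_n\overline{f_n(\mu)}\,f_n,$$
and evaluating at $\lambda$ gives the pointwise identity $U(\lambda,\mu)=\sum_n f_n(\lambda)\overline{f_n(\mu)}$.

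The main remaining obstacle is upgrading this pointwise identity to uniform convergence on compact subsets of $A_r\times A_r$. The key tool is Dini's theorem: the partial sums $S_N(\lambda):=\sum_{n=1}^{N}|f_n(\lambda)|^2$ are continuous, increase pointwise to the continuous function $U(\lambda,\lambda)$, and hence converge uniformly on every compact $K\subset A_r$. The tail estimate
$$\Bigl|\sum_{n>N}f_n(\lambda)\overline{f_n(\mu)}\Bigr|\le \bigl(U(\lambda,\lambda)-S_N(\lambda)\bigr)^{1/2}\bigl(U(\mu,\mu)-S_N(\mu)\bigr)^{1/2}$$
then delivers uniform convergence of the dyad expansion on $K\times K$, completing the proof. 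Nothing in this argument actually uses the annular geometry beyond the fact that $A_r$ is a connected open subset of $\C$ on which the identity principle holds; the same proof would work verbatim on any connected open set, which is consistent with this being the annular counterpart of the disk-based statement cited from \cite{NewPick}.
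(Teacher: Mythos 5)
Your proof is correct and complete; the paper itself does not prove Lemma \ref{4} but simply cites it as the annular counterpart of Theorem 2.88 in \cite{NewPick}, and your argument (realize $U$ as the reproducing kernel of $\mathcal{H}_U$, check that its elements are holomorphic via the bound $|f(\lambda)|\le\|f\|\sqrt{U(\lambda,\lambda)}$, get separability from the identity principle, expand against an orthonormal basis, and upgrade to local uniform convergence via Dini plus Cauchy--Schwarz on the tails) is exactly the standard Moore-type factorisation proof underlying that reference. Your closing observation that nothing uses the annular geometry beyond connectedness is also accurate.
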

\vspace{0.05 cm}

\small
\section{THE SPACE $\mathscr{H}^2(A_r)$}
\label{spacesection}
\large
Fix $r<1$ and let $A_r=\{r<|z|<1\}$. Denote by $H^2(A_r)$ the classical Hardy space on an annulus. This is the Hilbert function space 
$$H^2(A_r)=\{f\in\text{Hol}(A_r):\sup_{r<\rho<1}\frac{1}{2\pi}\int^{2\pi}_{0}|f(\rho e^{it})|^2\dt<\infty \} $$
equipped with the norm (for $f=\sum_{n\in\Z}a_n z^n$)
$$||f||^2_{H^2(A_r)}=\sum_{-\infty}^{\infty}(r^{2n}+1)|a_n|^2.$$
An important observation is that the multiplier algebra $\text{Mult}(H^2(A_r))$ is isometrically isomorphic to the algebra $H^{\infty}(A_r)$ of bounded holomorphic functions on $A_r.$ \\
Now, we define the space $\mathscr{H}^2(A_r)$ by equipping $H^2(A_r)$ with a different norm: 
$$||f||^2_{\mathscr{H}^2(A_r)}=\sum_{-\infty}^{-1}r^{2n}|a_n|^2+\sum_{0}^{\infty}|a_n|^2.$$
These two norms are equivalent, as 
$$||f||^2_{\mathscr{H}^2(A_r)}\le||f||^2_{H^2(A_r)}=\sum_{-\infty}^{\infty}(r^{2n}+1)|a_n|^2$$ $$\le 2\sum_{-\infty}^{-1}r^{2n}|a_n|^2+2\sum_{0}^{\infty}|a_n|^2=2||f||^2_{\mathscr{H}^2(A_r)} .$$
Hence, \begin{equation}||f||_{\mathscr{H}^2(A_r)}\le||f||_{H^2(A_r)}\le\sqrt{2}||f||_{\mathscr{H}^2(A_r)} ,\label{5}\end{equation}
for every $f\in \mathscr{H}^2(A_r).$ Notice also that the set 
$$\bigg\{\frac{z^n}{r^n} \bigg\}_{n\le -1}\cup\big\{z^n\big\}_{n\ge 0} $$
is an orthonormal basis for $\mathscr{H}^2(A_r).$ Applying Parseval's identity, we can then calculate the kernel function for $\mathscr{H}^2(A_r)$ as follows 
$$k_{A_r}(\lambda,\mu)=\langle k_{A_r}(\cdot,\mu),k_{A_r}(\cdot,\lambda)\rangle$$ $$
=\sum_{-\infty}^{-1}\langle  k_{A_r}(\cdot,\mu), z^n/r^n \rangle\langle  z^n/r^n,k_{A_r}(\cdot,\lambda)\rangle +\sum_0^{\infty}\langle k_{A_r}(\cdot,\mu),z^n\rangle\langle z^n,k_{A_r}(\cdot,\lambda)\rangle $$ $$=\sum_{-\infty}^{-1}\frac{\lambda^n}{r^n}{}
 \frac{\bar{\mu}^n}{r^n}  +\sum_{0}^{\infty}\lambda^n\bar{\mu}^n           $$
$$=(1-r^2)\frac{1}{\big(1-\frac{r^2}{\lambda\bar{\mu}}\big)\big(1-\lambda\bar{\mu}\big)}, \hspace{0.2 cm} \forall \lambda,\mu\in A_r.          $$
There are a few interesting observations we can make here. Recall that $a_2(\lambda,\mu)$ (where $\lambda=(\lambda_1,\lambda_2)$ and $\mu=(\mu_1,\mu_2)$) denotes the reproducing kernel of the Drury-Arveson space $H^2_2$ defined on the $2$-dimensional complex unit ball $\mathbb{B}_2=\{(z_1,z_2):|z_1|^2+|z_2|^2<1\}.$ Denote also by $s_2(\lambda,\mu)$ the kernel of the classical Hardy space $H^2(\DD^2)$ defined on the bidisk $\DD^2=\{(z_1,z_2):|z_1|,|z_2|<1\}$, 
            $$s_2(\lambda,\mu)=\cfrac{1}{(1-\lambda_1\overline{\mu_1})(1-\lambda_2\overline{\mu_2})}.$$
A short calculation then leads us to the equalities 
$$  k_{A_r}(\lambda,\mu)=$$
\begin{equation}
    \label{6}
   =\bigg(\frac{1-r^2}{1+r^2}\bigg)a_2\bigg(\bigg(\frac{\lambda}{\sqrt{r^2+1}},\frac{r}{\sqrt{r^2+1}}\frac{1}{\lambda}\bigg)  ,\bigg(\frac{\mu}{\sqrt{r^2+1}},\frac{r}{\sqrt{r^2+1}}\frac{1}{\mu}\bigg) \bigg)
\end{equation}
\begin{equation}
    \label{7}
    =(1-r^2)s_2\bigg(\bigg(\lambda,\frac{r}{\lambda}\bigg),\bigg(\mu,\frac{r}{\mu} \bigg) \bigg),
\end{equation}
for every $\lambda$ and $\mu$ in $A_r.$ We can now apply the pull-back theorem for reproducing kernels (see e.g. Theorem 5.7 in \cite{IntrotoRep}) to obtain two new descriptions of the norm of $\mathscr{H}^2(A_r)$. By ($\ref{6}$), we obtain that for every $f\in\mathscr{H}^2(A_r):$ 
$$||f||_{\mathscr{H}^2(A_r)}=$$  $$\resizebox{0.99\hsize}{!}{$=\sqrt{\frac{1+r^2}{1-r^2}}\min\{||g||_{H^2_2}:g\in H^2_2 \text{ and } g\Big(\frac{z}{\sqrt{1+r^2}},\frac{r}{\sqrt{1+r^2}}\frac{1}{z}\Big)=f(z), \forall z\in A_r\}$},$$
while (\ref{7}) gives us: 
$$||f||_{\mathscr{H}^2(A_r)}=$$  $$\resizebox{0.99\hsize}{!}{$=\sqrt{\frac{1}{1-r^2}}\min\{||g||_{H^2(\DD^2)}:g\in H^2(\DD^2) \text{ and } g\big(z,\frac{r}{z}\big)=f(z), \forall z\in A_r\}$}.$$
We will now use (\ref{5}) to compare the norm of the multipliers of $\mathscr{H}^2(A_r)$ with the supremum norm on $H^{\infty}(A_r).$
\begin{proposition}\label{8}
 For every $\phi\in H^{\infty}(A_r),$
 $$||\phi||_{\infty}\le ||\phi||_{\textnormal{Mult}(\mathscr{H}^2(A_r))}\le \sqrt{2}||\phi||_{\infty}. $$
 Moreover, the constant $\sqrt{2}$ is the best possible.
 
\end{proposition}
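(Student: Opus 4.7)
The plan is to sandwich $\|\phi\|_{\textnormal{Mult}(\mathscr{H}^2(A_r))}$ between $\|\phi\|_\infty$ and $\sqrt{2}\|\phi\|_\infty$ using the norm comparison (\ref{5}), and then to produce an explicit sequence of Laurent polynomials witnessing that the upper constant $\sqrt{2}$ cannot be improved.

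For the upper bound, I would combine the isometric identification $\textnormal{Mult}(H^2(A_r))=H^\infty(A_r)$ with (\ref{5}): for any $f\in\mathscr{H}^2(A_r)$,
$$\|M_\phi f\|_{\mathscr{H}^2(A_r)}\le\|M_\phi f\|_{H^2(A_r)}\le\|\phi\|_\infty\|f\|_{H^2(A_r)}\le\sqrt{2}\|\phi\|_\infty\|f\|_{\mathscr{H}^2(A_r)},$$
so $\|\phi\|_{\textnormal{Mult}(\mathscr{H}^2(A_r))}\le\sqrt{2}\|\phi\|_\infty$. For the lower bound, I would invoke the standard reproducing-kernel argument: for each $\lambda\in A_r$ the denominator $(1-|\lambda|^2)(1-r^2/|\lambda|^2)$ is positive, hence $k_{A_r}(\cdot,\lambda)$ is a nonzero element of $\mathscr{H}^2(A_r)$, and the identity $M_\phi^* k_{A_r}(\cdot,\lambda)=\overline{\phi(\lambda)}k_{A_r}(\cdot,\lambda)$ forces $|\phi(\lambda)|\le\|M_\phi\|$; taking the supremum over $\lambda\in A_r$ yields $\|\phi\|_\infty\le\|\phi\|_{\textnormal{Mult}(\mathscr{H}^2(A_r))}$.

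The heart of the argument is sharpness. My proposed test functions are $\phi_N(z):=z^N+r^N z^{-N}$ for $N\ge 1$. Writing $z=\rho e^{i\theta}$ with $r<\rho<1$, one computes
$$|\phi_N(z)|^2=(\rho^N+(r/\rho)^N)^2\cos^2(N\theta)+(\rho^N-(r/\rho)^N)^2\sin^2(N\theta),$$
and the substitution $u=\rho^N\in(r^N,1)$ shows that $u+r^N/u$ is maximized on this interval only at the endpoints, so $\|\phi_N\|_\infty=1+r^N$. On the other hand, applying $M_{\phi_N}$ to the constant function $1$ gives the Laurent series $\phi_N\cdot 1=z^N+r^N z^{-N}$, which by the definition of $\|\cdot\|_{\mathscr{H}^2(A_r)}$ has squared norm $1+r^{-2N}\cdot r^{2N}=2$. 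Combined with $\|1\|_{\mathscr{H}^2(A_r)}=1$, this yields $\|\phi_N\|_{\textnormal{Mult}(\mathscr{H}^2(A_r))}\ge\sqrt{2}$, hence
$$\frac{\|\phi_N\|_{\textnormal{Mult}(\mathscr{H}^2(A_r))}}{\|\phi_N\|_\infty}\ge\frac{\sqrt{2}}{1+r^N},$$
and since $r^N\to 0$, this ratio tends to $\sqrt{2}$ as $N\to\infty$, so no constant smaller than $\sqrt{2}$ can replace $\sqrt{2}$ in the stated inequality.

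The only delicate step is locating the right family of test functions; once $\phi_N$ is in hand, both the sup-norm computation and the evaluation of $\|\phi_N\cdot 1\|_{\mathscr{H}^2(A_r)}$ are elementary. The design is precisely tuned so that the two Laurent coefficients of $\phi_N$ contribute equally to the $\mathscr{H}^2(A_r)$-norm (producing the factor $\sqrt{2}$) while the inner-circle term $r^N z^{-N}$ becomes negligible in sup-norm as $N\to\infty$.
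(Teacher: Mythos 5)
Your proposal is correct and follows essentially the same route as the paper: the same sandwich via the norm equivalence (\ref{5}) for the upper bound, the standard reproducing-kernel eigenvector argument for the lower bound, and the identical family of test functions $z^N+r^Nz^{-N}$ (the paper's $g_n$) applied to the constant function $1$ to witness sharpness.
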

\begin{proof} Since $\mathscr{H}^2(A_r)$ is a reproducing kernel Hilbert space, the inequality $||\phi||_{\infty}\le ||\phi||_{\text{Mult}(\mathscr{H}^2(A_r))}$ is automatic, for all $\phi$ in $\text{Mult}(\mathscr{H}^2(A_r)).$\\ Now, fix $\phi\in H^{\infty}(A_r).$  (\ref{5}) allows us to write
$$||\phi f||_{\mathscr{H}^2(A_r)}\le ||\phi f||_{H^2(A_r)}\le ||\phi||_{\infty}||f||_{H^2(A_r)}\le \sqrt{2}||\phi||_{\infty}||f||_{\mathscr{H}^2(A_r)}, 
$$
for every $f\in\mathscr{H}^2(A_r)$.
Hence, $||\phi||_{\text{Mult}(\mathscr{H}^2(A_r))}\le \sqrt{2}||\phi||_{\infty},$ as desired. \\
Now, to prove that the constant $\sqrt{2}$ is the best possible, we define  $$g_n(z)=\frac{r^n}{z^n}+z^n,\hspace{0.2 cm} \forall z\in A_r,\hspace{0.2 cm}\forall n\ge 1.$$
Then, for every $z\in A_r:$
$$|g_n(z)|\le \frac{r^n}{|z|^n}+|z|^n\le 1+r^n.$$
Hence, $||g_n||_{\infty}=1+r^n$ for all  $n\ge 1.$ Notice also that
$$\frac{||g_n||_{\text{Mult}(\mathscr{H}^2(A_r))}}{||g_n||_{\infty}}=
\frac{||g_n||_{\text{Mult}(\mathscr{H}^2(A_r))}}{1+r^n}
$$
 $$\ge \frac{||g_n\cdot 1||_{\mathscr{H}^2(A_r)}}{1+r^n}  $$
$$=\frac{\sqrt{2}}{1+r^n}\xrightarrow{n\to\infty}\sqrt{2}. $$
This concludes our proof.
\end{proof}\noindent We return to equality (\ref{6}). This can be written equivalently as
$$k_{A_r}(\lambda,\mu)=\frac{1-r^2}{1+r^2}\frac{1}{1-\langle b_r(\lambda),b_r(\mu)\rangle_{\C^2}},\hspace{0.3 cm} \forall \lambda,\mu\in A_r,$$
where $$b_r(\lambda):=\bigg(\frac{\lambda}{\sqrt{r^2+1}},\frac{r}{\sqrt{r^2+1}}\frac{1}{\lambda}\bigg)$$
and $|b_r(\lambda)|<1$ in $A_r.$ \\ Hence, $k_{A_r}$ is a complete Pick kernel. This allows us to draw an interesting connection between the supremum norm of $H^{\infty}(A_r)$ and the multiplier norm of $\text{Mult}(H^2_2),$ formulated as an extension result of holomorphic functions off a subvariety of $\mathbb{B}_2.$
It is the content of Theorem \ref{9}, which we now prove.
\begin{proof}[Proof of Theorem \ref{9}]
Since $\frac{1+r^2}{1-r^2}k_{A_r}$ is a (normalized) complete Pick kernel, we can use (\ref{3}) to deduce that for every $\phi\in\text{Mult}(\mathscr{H}^2(A_r))=H^{\infty}(A_r),$
$$||\phi||_{\textnormal{Mult}(\mathscr{H}^2(A_r))}=$$ $$=min\{||\psi||_{\textnormal{Mult}(H^2_2)}:\psi\in\text{Mult}(H^2_2) \text{ and } \psi(b_r(z))=\phi(z),\text{ }\forall z\in A_r\}.$$
Applying Proposition \ref{8} then concludes the proof.
\end{proof}
\vspace{0.05 cm}
\begin{remark}
A rescaled version of the space $\mathscr{H}^2(A_r)$ was also considered by Arcozzi, Rochberg, Sawyer in \cite{Rochberg}. There, the authors proved the much more general fact that every Hardy space over a finitely connected domain with smooth boundary curves admits an equivalent norm originating from a complete Pick kernel. 

\end{remark}
\small
\section{PROOF OF THEOREM \ref{1}} \label{proofsection}
\large
We will now prove Theorem \ref{1}, which shows that the class of operators $\mathcal{F}_r=\{T\in\mathcal{B}(H): r^2T^{-1}(T^{-1})^*+TT^*\le r^2+1,\hspace{0.08 cm}\sigma(T)\subset A_r\}$ can be naturally associated with the space $\mathscr{H}^2(A_r)$.
\large
\begin{proof}[Proof of Theorem \ref{1}]
Since we have already established Proposition \ref{8}, it remains to show that 
$$\sup_{T\in\mathcal{F}_r}||\phi(T)||=||\phi||_{\textnormal{Mult}(\mathscr{H}^2(A_r))}, $$
for every $\phi\in H^{\infty}(A_r)$.  \\
First, suppose $||\phi||_{\textnormal{Mult}(\mathscr{H}^2(A_r))}\le 1$. By (\ref{2}), we obtain the existence of a positive-semidefinite kernel $U:A_r\times A_r\to\C$ such that
$$(1-\phi(\lambda)\overline{\phi(\mu)})k_{A_r}(\lambda,\mu)=U(\lambda,\mu),$$
hence we have the model formula
$$1-\phi(\lambda)\overline{\phi(\mu)}=\frac{U(\lambda,\mu)}{1-r^2}(1+r^2-r^2/\lambda\bar{\mu}-\lambda\bar{\mu}).$$
Evidently, $U$ is a positive semi-definite element of $\text{Her}(A_r)$. Applying Lemma \ref{4}, we obtain  the existence of a sequence $\{f_n\}_{n\in\mathbb{N}}$ of elements of $\text{Hol}(A_r)$ such that
$$ 1-\phi(\lambda)\overline{\phi(\mu)}=\frac{1}{1-r^2}\sum_{n=1}^{\infty}f_n(\lambda)(1+r^2-r^2/\lambda\bar{\mu}-\lambda\bar{\mu})\overline{f_n(\mu)},     $$
with uniform convergence on compact subsets of $A_r\times A_r$.
\\ Now, let $T\in \mathcal{F}_r.$ We can view each side of our previous equality as a hereditary function on $A_r$ and substitute $T$ into both sides using our hereditary functional calculus on $A_r$ (since $\sigma(T)\subseteq A_r$). This results in the equality 
\begin{equation} \label{11}
    1-\phi(T)\phi(T)^*=\frac{1}{1-r^2}\sum_{n=1}^{\infty}f_n(T)(1+r^2-r^2T^{-1}(T^{-1})^*-TT^*)f_n(T)^*.
\end{equation}
However, observe that since $T\in\mathcal{F}_r$ we can write
$$1+r^2-r^2T^{-1}(T^{-1})^*-TT^*\ge 0$$
$$\RR f_n(T)(1+r^2-r^2T^{-1}(T^{-1})^*-TT^*)f_n(T)^*\ge 0 $$
$$\RR \sum_{n=1}^{\infty}f_n(T)(1+r^2-r^2T^{-1}(T^{-1})^*-TT^*)f_n(T)^*\ge 0.$$
By (\ref{11}), we can then conlude that 
$$  1-\phi(T)\phi(T)^*\ge 0 $$
$$\RR ||\phi(T)||\le 1.$$
We have showed that 
$$\sup_{T\in\mathcal{F}_r}||\phi(T)||\le ||\phi||_{\textnormal{Mult}(\mathscr{H}^2(A_r))}.$$
We now show the reverse inequality. First, we prove a lemma.
\begin{lemma}\label{13}
Every $T\in \mathcal{B}(H)$ such that $r^2T^{-1}(T^{-1})^*+TT^*\le r^2+1$ satisfies $$r^2\le TT^*\le 1.$$
Thus, we also have that $\sigma(T)\subset \overline{A_r}$ for every such operator.
\end{lemma}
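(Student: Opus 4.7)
The plan is to show $\sigma(T^*T)\subset[r^2,1]$; this is equivalent to $r^2 I\le TT^*\le I$ because invertibility of $T$ makes $T^*T$ and $TT^*$ unitarily equivalent via the polar decomposition $T=U|T|$, so they share both spectrum and norm. Using $(T^{-1})^*=(T^*)^{-1}$ we have $T^{-1}(T^{-1})^*=(T^*T)^{-1}$, so the hypothesis reads
$$r^2(T^*T)^{-1}+TT^*\le (r^2+1)I.$$
The main obstacle is that $(T^*T)^{-1}$ and $TT^*$ do not generally commute, which rules out a direct joint functional calculus; I would instead extract scalar inequalities via approximate eigenvectors (Weyl sequences) for each of the positive self-adjoint operators $TT^*$ and $T^*T$, handling the upper and lower spectral bounds separately.

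For the upper bound, set $\Lambda:=\|TT^*\|\in\sigma(TT^*)$ and pick unit vectors $v_n$ with $\|(TT^*-\Lambda)v_n\|\to 0$, so that $\langle TT^*v_n,v_n\rangle\to\Lambda$. From $T^*T\le\Lambda I$ we get the coarse bound $(T^*T)^{-1}\ge\Lambda^{-1}I$, whence $\langle(T^*T)^{-1}v_n,v_n\rangle\ge 1/\Lambda$. Applying the hypothesis to each $v_n$ and passing to the limit yields $r^2/\Lambda+\Lambda\le r^2+1$, i.e.\ $(\Lambda-1)(\Lambda-r^2)\le 0$, which forces $\Lambda\le 1$. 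For the lower bound, set $\mu:=\min\sigma(T^*T)=\min\sigma(TT^*)>0$ and pick unit vectors $w_n$ with $\|(T^*T-\mu)w_n\|\to 0$; continuous functional calculus applied to $x\mapsto 1/x$ on $\sigma(T^*T)$ gives $\langle(T^*T)^{-1}w_n,w_n\rangle\to 1/\mu$, while $TT^*\ge\mu I$ gives $\langle TT^*w_n,w_n\rangle\ge\mu$. The hypothesis then forces $r^2/\mu+\mu\le r^2+1$, hence $\mu\ge r^2$. Together these give $r^2I\le TT^*\le I$.

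The spectral inclusion follows immediately from the norm bounds just obtained: $\|T\|^2=\|TT^*\|\le 1$ places $\sigma(T)\subset\overline{\DD}$, and $\|T^{-1}\|^2=\|T^{-1}(T^{-1})^*\|=\|(T^*T)^{-1}\|\le r^{-2}$ places $\sigma(T^{-1})$ in the closed disk of radius $1/r$, so $\sigma(T)\subset\{|z|\ge r\}$. Combining, $\sigma(T)\subset\overline{A_r}$.
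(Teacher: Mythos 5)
Your proof is correct and follows essentially the same route as the paper's: you test the operator inequality against vectors that nearly attain the extreme points of $\sigma(TT^*)$ and use a coarse complementary bound on the other term to reduce to the scalar inequality $r^2/t+t\le r^2+1$, which forces $t\in[r^2,1]$ --- exactly the paper's argument, phrased with Weyl sequences instead of almost-norming vectors. The only cosmetic differences are that the paper argues by contradiction and obtains the lower bound ``by an analogous argument'' (i.e.\ the symmetry between $T$ and $rT^{-1}$), whereas you handle both spectral endpoints directly.
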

\begin{proof}[Proof of Lemma \ref{13}]
Suppose instead that $||T^*||=||T||=1+\delta>1.$ Thus, for any $\epsilon\in(0,\delta)$ there exists $y\in H$ with $||y||=1$ such that $||T^*y||>1+\delta-\epsilon.$  Since $||T^*x||\le(1+\delta)||x||,$ we also obtain $\frac{1}{1+\delta}||x||\le ||(T^{-1})^*x||,$ for every $x\in H.$ We can now write
$$\frac{r^2}{(1+\delta)^2}+(1+\delta-\epsilon)^2<r^2||(T^{-1})^*y||^2+||T^*y||^2$$ $$=\langle (r^2T^{-1}(T^{-1})^*+TT^*)y,y\rangle\le r^2+1.$$
Letting $\epsilon\to 0,$ we obtain
$$\frac{r^2}{(1+\delta)^2}+(1+\delta)^2\le r^2+1,$$
a contradiction. Hence, $||T||\le 1$ and an analogous argument shows that $||rT^{-1}||\le 1$ as well.
\end{proof}
\noindent
Our next step will be to prove the following special case.
\begin{lemma} \label{14}
Let $\phi$ be holomorphic in a neighborhood of $\overline{A_r}$ with the property that $||\phi(T)||\le 1$ for all $T\in \mathcal{B}(H)$ such that $r^2T^{-1}(T^{-1})^*+TT^*\le r^2+1$. Then, $||\phi||_{\textnormal{Mult}(\mathscr{H}^2(A_r))}\le 1. $
\end{lemma}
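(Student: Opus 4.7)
The plan is to apply the hypothesis to a single, carefully chosen operator, namely the coordinate multiplication $T=M_z$ acting on the Hilbert space $\mathscr{H}^2(A_r)$ itself. If I can (i) verify that $M_z$ satisfies $r^2 M_z^{-1}(M_z^{-1})^* + M_z M_z^* \le r^2+1$ and (ii) identify $\phi(M_z)$ with the multiplication operator $M_\phi$, then the hypothesis gives
$$\|M_\phi\| = \|\phi(M_z)\| \le 1,$$
which is exactly $\|\phi\|_{\textnormal{Mult}(\mathscr{H}^2(A_r))}\le 1$. This is the standard ``universal operator'' strategy for showing that a class-induced calcular norm coincides with a multiplier norm on a reproducing kernel space.

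For step (i), I would work in the canonical orthonormal basis $\{e_n\}_{n\in\Z}$ of $\mathscr{H}^2(A_r)$, where $e_n=z^n/r^n$ for $n\le -1$ and $e_n=z^n$ for $n\ge 0$. A direct computation shows that $M_z$ is a two-sided weighted shift with $M_z e_n = r\, e_{n+1}$ for $n\le -1$ and $M_z e_n = e_{n+1}$ for $n\ge 0$; moreover $M_z$ is invertible, since $1/z\in H^\infty(A_r)$ is a bounded multiplier of $\mathscr{H}^2(A_r)$ (by Proposition \ref{8}). Both $M_zM_z^*$ and $M_z^{-1}(M_z^{-1})^*$ are then diagonal in $\{e_n\}$, and a short eigenvalue calculation gives that $r^2 M_z^{-1}(M_z^{-1})^* + M_z M_z^*$ takes the value $r^2+1$ at every index $n\ne 0$ and the value $2r^2$ at $n=0$. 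Since $2r^2\le r^2+1$, the required inequality holds; the fact that it is saturated off a single index explains precisely how the weights in the norm of $\mathscr{H}^2(A_r)$ have been tuned to the class $\mathcal{F}_r$. In particular, Lemma \ref{13} applies to $M_z$, so $\sigma(M_z)\subseteq\overline{A_r}$.

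For step (ii), since $\phi$ is holomorphic in a neighborhood of $\overline{A_r}\supseteq\sigma(M_z)$, its Laurent expansion $\phi=\sum_{n\in\Z}a_n z^n$ converges uniformly on $\overline{A_r}$. Taking Laurent polynomial partial sums $p_N\to\phi$ uniformly on $\overline{A_r}$, I have $p_N(M_z)=M_{p_N}$ tautologically; by the Riesz--Dunford calculus $p_N(M_z)\to \phi(M_z)$ in operator norm, while Proposition \ref{8} gives
$$\|M_{p_N}-M_\phi\|\le\sqrt{2}\,\|p_N-\phi\|_\infty\to 0.$$
Passing to the limit yields $\phi(M_z)=M_\phi$, which combined with step (i) finishes the argument. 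The only genuinely substantive point is the matrix-level verification of the operator inequality in step (i); the rest amounts to recognising that $M_z$ is the ``canonical'' operator in the class once the orthonormal basis of $\mathscr{H}^2(A_r)$ is written down, together with standard bookkeeping to match $\phi(M_z)$ with $M_\phi$.
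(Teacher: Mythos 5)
Your proposal is correct and takes essentially the same route as the paper: the paper also applies the hypothesis to the bilateral shift $S=M_z$ acting on $\mathscr{H}^2(A_r)$ itself, the only difference being that it verifies $r^2S^{-1}(S^{-1})^*+SS^*\le r^2+1$ by testing against the kernel functions $k_{A_r}(\cdot,\lambda)$ (which yields the constant positive kernel $1-r^2$) rather than by diagonalizing in the orthonormal basis. Your weighted-shift computation is accurate (the operator is diagonal with value $r^2+1$ at every index $n\ne 0$ and $2r^2$ at $n=0$), so both verifications are sound.
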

\begin{proof}[Proof of Lemma \ref{14}]
If $T\in \mathcal{B}(H)$ is such that $r^2T^{-1}(T^{-1})^*+TT^*\le r^2+1$, then by Lemma \ref{13} $T$ also satisfies $r^2\le TT^*\le 1.$ In particular, $\sigma(T)$ has to lie in $\overline{A_r}$ and so the operator $\phi(T)$ is indeed well-defined whenever $\phi\in\text{Hol}(\overline{A_r}).$ \\
Now, suppose that $\phi$ satisfies the given hypotheses and consider the bilateral shift operator $(Sf)(z)=zf(z)$ defined on $\mathscr{H}^2(A_r).$ A standard computation shows that 
$S^*k_{A_r}(\cdot,\lambda)=\bar{\lambda}k_{A_r}(\cdot,\lambda),$ for every $\lambda\in A_r.$ Notice also that for every $\lambda, \mu$ in $A_r,$
$$\langle (r^2+1-r^2S^{-1}(S^{-1})^*-SS^*)k_{A_r}(\cdot,\mu),k_{A_r}(\cdot,\lambda)\rangle$$
$$=(r^2+1)\langle k_{A_r}(\cdot,\mu),k_{A_r}(\cdot,\lambda)\rangle-r^2\langle (S^{-1})^*k_{A_r}(\cdot,\mu),(S^{-1})^*k_{A_r}(\cdot,\lambda)\rangle$$ $$-\langle S^*k_{A_r}(\cdot,\mu),S^*k_{A_r}(\cdot,\lambda)\rangle $$
$$=(r^2+1)k_{A_r}(\lambda,\mu)-\frac{r^2}{\lambda\bar{\mu}}k_{A_r}(\lambda,\mu)-\lambda\bar{\mu}k_{A_r}(\lambda,\mu)$$
$$=1-r^2,$$
a (trivial) positive semi-definite kernel on $A_r\times A_r.$. Since linear combinations of kernel functions are dense in $\mathscr{H}^2(A_r)$, our previous equality implies that 
$$r^2+1-r^2S^{-1}(S^{-1})^*-SS^*\ge 0.$$
But our hypotheses on $\phi$ then allow us to deduce that
$$||\phi||_{\textnormal{Mult}(\mathscr{H}^2(A_r))}=||\phi(S)||\le\sup_{T\in\mathcal{F}_r}||\phi(T)||\le 1,$$
which concludes the proof of the lemma.
\end{proof}
\noindent
To complete our main proof, we will apply an approximation argument to extend the previous special case to every multiplier of $\mathscr{H}^2(A_r).$ \\ Suppose $\phi\in\text{Hol}(A_r)$ is such that $\sup_{T\in\mathcal{F}_r}||\phi(T)||\le 1.$
For $n>2/(1-r)$, define $A_{r,n}:=\{r+1/n<|\lambda|<1-1/n \}.$ We will be needing the following lemma, the proof of which is just a simple calculation. 
\begin{lemma}\label{12}
The following two inequalities hold for all $n>2/(1-r)$:
$$\frac{\big(r+\frac{1}{n} \big)^2}{1+\bigg(\cfrac{r+\frac{1}{n}}{1-\frac{1}{n}} \bigg)^2}\ge \frac{r^2}{1+r^2}, $$ 

$$\frac{1}{\big(1-\frac{1}{n} \big)^2+\big(r+\frac{1}{n} \big)^2 } \ge \frac{1}{1+r^2}.$$

\end{lemma}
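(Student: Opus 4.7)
The plan is to reduce each of the two inequalities to a manifestly elementary algebraic statement by substituting $u = 1/n$. The hypothesis $n > 2/(1-r)$ forces $u \in (0,(1-r)/2)$, so in particular $u < 1-r$, which is the only quantitative fact I will use. The key simplifying observation is that with $a = r+u$ and $b = 1-u$ we have $a + b = 1 + r$, hence $a^2 + b^2 = (1+r)^2 - 2ab$; this lets me write every expression in terms of $ab$ alone, avoiding any messy double expansions.

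For the second inequality I would simply expand once: $(1-u)^2 + (r+u)^2 - (1+r^2) = 2u\bigl(u - (1-r)\bigr)$, which is non-positive since $u < 1-r$. Rearranging gives precisely the displayed bound.

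For the first inequality, after rewriting $\frac{a^2}{1+(a/b)^2}$ as $\frac{(ab)^2}{a^2+b^2}$ and clearing denominators, the claim becomes $(1+r^2)(ab)^2 \ge r^2(a^2+b^2)$. I would then compute $ab = (r+u)(1-u) = r + u(1-r-u)$, which exceeds $r$ under our bound on $u$, while $a^2+b^2 = (1+r)^2 - 2ab = 1 + r^2 - 2u(1-r-u)$ is at most $1+r^2$. The proof finishes via the trivial identity
\[
(1+r^2)(ab)^2 - r^2(a^2+b^2) = (1+r^2)\bigl[(ab)^2 - r^2\bigr] + r^2\bigl[(1+r^2) - (a^2+b^2)\bigr],
\]
whose right-hand side is a sum of two non-negative terms by the two previous observations.

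There is really no conceptual obstacle: the statement is a pair of scalar inequalities, and the only mildly clever move is using $a+b = 1+r$ to collapse everything into expressions in $ab$, which turns what at first looks like a messy rational inequality into a sum of two manifestly non-negative pieces.
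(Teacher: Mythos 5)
Your proof is correct: both reductions check out (indeed $(1-u)^2+(r+u)^2-(1+r^2)=2u(u-(1-r))\le 0$ for $u=1/n<(1-r)/2$, and the identity splitting $(1+r^2)(ab)^2-r^2(a^2+b^2)$ into two non-negative pieces is valid since $ab=r+u(1-r-u)\ge r$ and $a^2+b^2\le 1+r^2$). The paper omits the proof entirely, calling it ``just a simple calculation,'' so your argument simply supplies that calculation in a clean, organized form.
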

\noindent 
Now, define the classes of operators 
 $$\mathcal{F}_{r,n}=\{T\in \mathcal{B}(H):$$ $$[r+(1/n)]^2T^{-1}(T^{-1})^*+[1/(1-n)]^2TT^*\le 1+[(r+(1/n))/(1-(1/n))]^2 \}$$
 and also the family of kernels
 $$k_{r,n}(\lambda,\mu):=\cfrac{1}{\Bigg(   1-\cfrac{\big(r+1/n  \big)^2}{\bar{\mu}\lambda}\Bigg)\Bigg( 1-\cfrac{\bar{\mu}\lambda}{\big(1-1/n\big)^2}\Bigg)}$$ $$=\cfrac{1}{\Bigg(1+\cfrac{(r+1/n)^2}{(1-1/n)^2}-\cfrac{\big(r+1/n  \big)^2}{\bar{\mu}\lambda}-\cfrac{\bar{\mu}\lambda}{\big(1-1/n\big)^2} \Bigg)}.$$
Each $k_{r,n}$ is a positive-semidefinite kernel on $A_{r,n}\times A_{r,n},$ simply a rescaled version of $k_{A_r}.$ Denote by $\mathscr{H}^2(A_{r,n})$ the corresponding Hilbert space of holomorphic functions on $A_{r,n}.$
 \\ 
 Now, let $T\in\mathcal{F}_{r,n}$. By the appropriately rescaled version of Lemma \ref{13}, we obtain $\sigma(T)\subseteq \overline{A_{r,n}}\subseteq A_r.$  Observe also that 
 $$ [r+(1/n)]^2T^{-1}(T^{-1})^*+[1/(1-n)]^2TT^*\le 1+[(r+(1/n))/(1-(1/n))]^2   $$

$$\Longleftrightarrow 
\frac{\big(r+\frac{1}{n} \big)^2}{1+\bigg(\cfrac{r+\frac{1}{n}}{1-\frac{1}{n}} \bigg)^2}||(T^{-1})^*x||^2 +
\frac{1}{\big(1-\frac{1}{n} \big)^2+\big(r+\frac{1}{n} \big)^2 } ||T^*x||^2\le ||x||^2, 
$$
for every $x\in H.$
Using our inequalities from Lemma \ref{12}, we obtain 
$$\cfrac{r^2}{r^2+1}||(T^{-1})^*x||^2+\cfrac{1}{r^2+1}||T^*x||^2 $$
$$\le    \frac{\big(r+\frac{1}{n} \big)^2}{1+\bigg(\cfrac{r+\frac{1}{n}}{1-\frac{1}{n}} \bigg)^2}||(T^{-1})^*x||^2 +
\frac{1}{\big(1-\frac{1}{n} \big)^2+\big(r+\frac{1}{n} \big)^2 } ||T^*x||^2$$ $$\le ||x||^2, \hspace{0.3 cm} \forall x\in H.                $$
Thus, $r^2T^{-1}(T^{-1})^*+TT^*\le r^2+1$, which means that $T\in\mathcal{F}_r.$ By our assumptions on $\phi,$ we then obtain that $||\phi(T)||\le 1.$ \\
To sum up, we have proved (for every $n>2/(1-r)$) that $\phi$, a function holomorphic on a neighborhood of $\overline{A_{r,n}}\hspace{0.05 cm} ,$ satisfies $||\phi(T)||\le 1$ for all $T\in \mathcal{B}(H)$ such that $$[r+(1/n)]^2(T^{-1})^*T^{-1}+[1/(1-n)]^2T^*T\le 1+[(r+(1/n))/(1-(1/n))]^2.$$
The appropriately rescaled version of Lemma \ref{14} now allows us to conclude that $||\phi||_{\textnormal{Mult}(\mathscr{H}^2(A_{r,n}))}\le 1$ and so there exists a positive semi-definite hereditary function $h_n:A_{r,n}\times A_{r,n}\to\C$ such that 
$$1-\phi(\lambda)\overline{\phi(\mu)}=(1/k_{r,n}(\lambda,\mu))h_n(\lambda,\mu) $$
\begin{equation}\label{30}
  =\Bigg(1+\cfrac{(r+1/n)^2}{(1-1/n)^2}-\cfrac{\big(r+1/n  \big)^2}{\bar{\mu}\lambda}-\cfrac{\bar{\mu}\lambda}{\big(1-1/n\big)^2}\Bigg) h_n(\lambda,\mu),
\end{equation}
for all $\lambda,\mu \in A_{r,n}$ and for every $n>2/(1-r).$\\
Let $K\subseteq A_r\times A_r$ be compact and fix $N>2/(1-r)$ large enough so that $K\subseteq A_{r,n}\times A_{r,n}$ for every $n\ge N$. Then, for every such $n$ and for every $\lambda\in K$ we have 
$$|h_n(\lambda,\lambda)|=h_n(\lambda,\lambda)=(1-|\phi(\lambda)|^2)k_{r,n}(\lambda,\lambda)$$ $$\le \sup_{z\in K} \Bigg[(1-|\phi(z)|^2)\cfrac{1}{\big(   1-(r+1/n)^2/|z|^2\big)\big( 1-|z|^2/\big(1-1/n)^2\big)}\Bigg]$$
$$\le \sup_{z\in K} \Bigg[(1-|\phi(z)|^2)\cfrac{1}{\big(   1-(r+1/N)^2/|z|^2\big)\big( 1-|z|^2/\big(1-1/N)^2\big)}\Bigg]$$ \begin{equation}\label{40} =M<\infty,\end{equation}
where $M$ is independent of $n.$ Notice now that by Lemma \ref{4} there exists (for every $n\in\N$) a function $u_n:A_{r,n}\to l^2$ with the property that 
$$h_n(\lambda,\mu)=\langle u_n(\lambda),u_n(\mu)\rangle_{l^2}, \hspace{0.3 cm} \text{in } A_{r,n}\times A_{r,n}.$$
Hence, using the Cauchy-Schwarz inequality and the bound (\ref{40}) we can write 
$$|h_n(\lambda,\mu)|^2\le |h_n(\lambda,\lambda)||h_n(\mu,\mu)|\le M^2,$$
for every $\lambda,\mu\in K$ and for every $n\ge N$. In other words, the sequence of holomorphic functions $\{(\lambda,\mu)\mapsto h_n(\lambda,\bar{\mu})\}_{n\ge N}$ is uniformly bounded on $K.$
By Montel's theorem and the completeness of $\text{Her}(A_r)$, we can then deduce the existence of an element $h\in\text{Her}(A_r)$ with the property that $h_{n_k}\to h$
uniformly on compact subsets of $A_r\times A_r$ for some subsequence $\{n_k\}.$ Since every $h_{n_k}$ is positive semi-definite, the same must be true for $h$ as well. Now, equality (\ref{30}) combined with the convergence $h_{n_k}\to h$ gives
$$1-\phi(\lambda)\overline{\phi(\mu)}=(1+r^2-r^2/\lambda\bar{\mu}-\lambda\bar{\mu})h(\lambda,\mu) \hspace{0.3 cm} \text{on } A_r\times A_r,$$
and so $$(1-\phi(\lambda)\overline{\phi(\mu)})k_{A_r}(\lambda,\mu)\ge 0 \hspace{0.3 cm} \text{on } A_r\times A_r.$$
Thus, $||\phi||_{\textnormal{Mult}(\mathscr{H}^2(A_r))}\le 1$ and our proof is complete.
\end{proof}
\begin{remark}
The class $\mathcal{F}_r$ was also considered in the recent preprint \cite{bello2021operator} of Bello, Yakubovich, where the authors obtained, with an alternative approach, that $A_r$ is a complete $\sqrt{2}$-spectral set for every $T\in\mathcal{F}_r$.

\end{remark}
\begin{remark} By Lemma \ref{13}, every $T\in \mathcal{B}(H)$ such that $r^2T^{-1}(T^{-1})^*+TT^*\le r^2+1$ also satisfies $r^2\le TT^*\le 1$. The converse assertion is not true, even if we restrict ourselves to $2\times 2$ matrices. Indeed, define $A\in\mathcal{B}(\C^2)$ by 
$$A=\begin{bmatrix} 
\sqrt{r} & 1-r \\
0 & \sqrt{r} 
\end{bmatrix}.$$
A short computations shows that $||A||=||rA^{-1}||=1.$ However, notice that 
$$\Big\langle \Big(r^2+1-r^2A^{-1}(A^{-1})^*-AA^*\Big)\begin{bmatrix}1 \\\sqrt{r} \end{bmatrix},\begin{bmatrix}1 \\\sqrt{r} \end{bmatrix}\Big\rangle$$
$$=r^2(r+1-1/r-(2-1/r)^2),$$
which is negative for all $r\in(0,1).$

\end{remark}

 \printbibliography

\end{document}